\newtheorem{theorem}{Theorem}
\theoremstyle{remark}
\newtheorem*{remark}{Remark added in 2022}
\newtheorem*{remarkWZ}{Comment from Wadim Zudilin, 29 November 2022}
\Crefname{conjecture}{Conjecture}{Conjectures}
\theoremstyle{plain}
\theoremstyle{plain}
\author{Andrew V. Sills}
\address{Department of Mathematical Sciences\newline
Georgia Southern University\newline
Statesboro, Georgia 30458, U.S.A.}
\email{asills@georgiasouthern.edu}
\title[Rogers--Ramanujan Type Identities by Constant Terms]{Derivation of Identities of the Rogers--Ramanujan Type by the Method of Constant Terms}
\begin{document}
\date{Original: June 1994; annotated with some corrections on November 23, 2022}
\maketitle
\addtocounter{section}{-1}
\section{Explanation}
What follows is
a lightly edited version of
 the author's unpublished master's essay, submitted in partial fulfillment of the requirements of
the degree of Master of Arts at the Pennsylvania State University, dated June 1994,
written under the supervision of Professor George E. Andrews.  
It was retyped by the author on November 23, 2022.  Obvious typographical errors in the
original were corrected without comment; hopefully not too many new errors were introduced
during the retyping.
Explanatory text added by the author in 2022 is notated by \emph{Remark added in 2022}.
After the initial posting on the arXiv on November 29, 2022, the author received email 
from Wadim Zudilin and George Andrews, pointing out some typos and making some interesting
comments.  These comments have been incorporated in this revised submission to the arXiv.
The bibliography in this version is more extensive than that of the original.  

Lastly, in 1994, I neglected to
mention that $q$ is being treated as a formal variable throughout.

\section{Introduction}
In Chapter 4 of his monograph~\emph{$q$-series: Their Development and Application in Analysis,
Number Theory, Combinatorics, Physics, and Computer Algebra}~\cite{A86}, George Andrews showed
that ``[g]iven the Rogers--Ramanujan identities, \dots [certain] results become easy consequences of constant
term arguments.''  The results referred to are other series--product identities which are similar in form to the
Rogers--Ramanujan identiites.

The method of constant terms is executed as follows: one starts with a series involving powers of $q$ in the numerator
and $q$-factorials in the denominator and possibly the numerator.  Then the series is re\"expressed as the constant term
in a product of two series involving a new variable, $z$.  The $q$-binomial theorem or one of its corollaries is
invoked to convert the two series into products, which are then grouped in a new way, the $q$-binomial theorem and
one or two of its corollaries are then invoked once again, and a new series appears.  In the case of all of the identities
addressed by Andrews in his monograph~\cite{A86}, the new series produced was always easily recognizable as some
form of the Rogers--Ramanujan identities multiplied by an infinite product.  As we shall demonstrate here, sometimes an
unfamiliar series is derived via this method, and combining this information with established results, we generate new
series--product identities.

The following standard notation will be used throughout:

\[ (a)_n = (a;q)_n = (1-a)(1-aq)(1-aq^2)\cdots (1-aq^{n-1}) \]
\[ (a)_\infty = (a;q)_\infty = \lim_{n\to\infty} (a;q)_n \]

The following non-standard notation will also be used: $CT[X]$ will denote the constant term, that is,
the co\"efficient of $z^0$ in the series or infinite product $X$.

The following identities will be assumed:

\begin{align}
G(q) \equiv \sum_{n=0}^\infty \frac{q^{n^2}}{(q)_n} &= \prod_{n=1}^\infty \frac{1}{(1-q^{5n-4})(1-q^{5n-1})} \label{RR1}\\
H(q) \equiv \sum_{n=0}^\infty  \frac{q^{n^2+n}}{(q)_n}&= \prod_{n=1}^\infty \frac{1}{(1-q^{5n-3})(1-q^{5n-2})} \label{RR2}\\
\sum_{n=0}^\infty \frac{(a)_n t^n}{(q)_n} &= \frac{(at)_\infty}{(t)_\infty} \label{qBT}\\
\sum_{n=-\infty}^\infty \frac{ (-1)^n q^{\binom n2} t^n }{ (b)_n } &= \frac{ (t)_\infty (q/t)_\infty (q)_\infty  }{ (b/t)_\infty (b)_\infty }
\label{i4}\\
\sum_{n=0}^\infty \frac{q^{\binom n2} t^n}{(q)_n} &= (-t)_\infty \label{i5}\\
\sum_{n=0}^\infty \frac{ q^{n^2} }{(q^4;q^4)_n } &= \frac{ G(q)}{(-q^2;q^2)_\infty} \label{i6}\\
\sum_{n=0}^\infty \frac{ q^{n^2 + 2n}}{(q^4;q^4)_n} &= \frac{ H(q)}{(-q^2;q^2)_\infty} \label{i7}\\
\sum_{n=0}^\infty \frac{ q^{n^2} (-q;q^2)_n}{ (q^4;q^4)_n} &= \frac{ (q^3;q^6)^2 (q^6;q^6)_\infty (-q;q^2)_\infty } { (q^2;q^2)_\infty }
\label{i8}\\
\sum_{n=0}^\infty \frac{ q^{2n^2 - n} (-q;q^2)_n}{(q^2;q^2)_n (q^2;q^4)_n} & = (-q)_\infty \label{i9}\\
\sum_{n=0}^\infty \frac{ q^{n^2} (-q)_n} { (q;q^2)_{n+1} (q)_n } &= \frac{ (q^3;q^6)^2 (q^6;q^6)_\infty (-q)_\infty } { (q)_\infty }
\label{i10}
\end{align}

Equations~\eqref{RR1} and~\eqref{RR2} are the Rogers--Ramanujan identities.  
Equation~\eqref{qBT} is the $q$-analog of the binomial series~\cite[p. 17, Theorem 2.1]{A76}.
Equations~\eqref{i4}~\cite[p. 115, Eq. (C.2)]{A86} and~\eqref{i5}~\cite[p. 19, Eq. (2.2.6)]{A76} can be easily deduced from~\eqref{qBT}.
Equations~\eqref{i6} and~\eqref{i7} were originally proved by Rogers and reproved by Slater~\cite[p. 153--154, Eqns. (20) and (16) respectively]{S52}.
Equations~\eqref{i8},~\eqref{i9}, and~\eqref{i10} were proved by Slater~\cite[pp. 154, 157; Eqs. (25), (52), and (26) respectively]{S52}.

\begin{remark}
I should have given more precise references for the various identities due to L. J. Rogers.
The Rogers--Ramanujan identities~\eqref{RR1},~\eqref{RR2} first appeared in~\cite[p. 328 (2); p. 330 (2), resp.]{R94}.
Eq.~\eqref{i6} first appeared in~\cite[p. 330]{R94}, and~\eqref{i7} in~\cite[p. 331, above (7)]{R94}.
Additionally, Eq.~\eqref{i8} first appeared in Ramanujan's lost notebook~\cite[p. 85, Entry 4.2.7]{AB09}.
\end{remark}

\section{New proofs for old identities}
As demonstrated in the proofs of Theorems~\ref{thm1} and~\ref{thm2} below, when the method of constant terms is applied to certain ``mod 5'' identities of
the Rogers--Ramanujan type, we simply reprove known results, as in Andrews~\cite{A86}.

\begin{theorem} \label{thm1}
\begin{equation} \sum_{n=0}^\infty \frac{ (-1)^n q^{3n^2} }{(q^4;q^4)_n (-q;q^2)_n } = \prod_{n=1}^\infty \frac{1}{ (1+q^n)(1-q^{5n-4})(1-q^{5n-1})}.
 \label{i11} 
 \end{equation}
\end{theorem}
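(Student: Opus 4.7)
My plan is to reduce the identity to a clean finite $q$-binomial vanishing identity, using (i5) and (i6) as the main tools. The starting observation is that the right-hand side is $G(q)/(-q)_\infty$ and that $(-q)_\infty = (-q;q^2)_\infty (-q^2;q^2)_\infty$. Multiplying the claim through by $(-q;q^2)_\infty$, recognizing $G(q)/(-q^2;q^2)_\infty$ as $\sum_{k\ge0} q^{k^2}/(q^4;q^4)_k$ via (i6), and using $(-q;q^2)_\infty/(-q;q^2)_n = (-q^{2n+1};q^2)_\infty$ on the left, the identity becomes equivalent to
$$\sum_{n=0}^\infty \frac{(-1)^n q^{3n^2} (-q^{2n+1};q^2)_\infty}{(q^4;q^4)_n} = \sum_{k=0}^\infty \frac{q^{k^2}}{(q^4;q^4)_k}.$$

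Next I would invoke (i5) with $q$ replaced by $q^2$ and $t=q^{2n+1}$ to obtain $(-q^{2n+1};q^2)_\infty = \sum_m q^{m^2+2mn}/(q^2;q^2)_m$, producing a double sum on the left. The key algebraic miracle is that the combined exponent factors as $3n^2 + m^2 + 2mn = (n+m)^2 + 2n^2$, which makes the substitution $k = n+m$ natural. Grouping by $k$ yields
$$\sum_{k=0}^\infty q^{k^2} \sum_{n=0}^k \frac{(-1)^n q^{2n^2}}{(q^4;q^4)_n (q^2;q^2)_{k-n}},$$
so matching this coefficientwise against $\sum_k q^{k^2}/(q^4;q^4)_k$ reduces the whole theorem to the finite claim
$$\sum_{n=0}^k \frac{(-1)^n q^{2n^2}}{(q^4;q^4)_n (q^2;q^2)_{k-n}} = \frac{1}{(q^4;q^4)_k}, \qquad k \ge 0.$$

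The main obstacle is this finite identity; everything else is routine bookkeeping. Clearing denominators via $(q^4;q^4)_n = (q^2;q^2)_n (-q^2;q^2)_n$, it reads
$$\sum_{n=0}^k (-1)^n q^{2n^2}\binom{k}{n}_{q^2}(-q^{2n+2};q^2)_{k-n} = 1.$$
My plan here is to expand $(-q^{2n+2};q^2)_{k-n}$ by the finite $q$-binomial theorem (the truncated form of (i5)), then apply the standard rearrangement $\binom{k}{n}_{q^2}\binom{k-n}{j}_{q^2} = \binom{k}{s}_{q^2}\binom{s}{n}_{q^2}$ with $s = n+j$ to interchange the order of summation. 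A short calculation collapses the exponent to $s^2+s+n^2-n$, whereupon the inner sum over $n$ becomes exactly the finite $q$-binomial expansion of $(1;q^2)_s$. Since $(1;q^2)_s = 0$ for $s \ge 1$, only the $s=0$ term survives and contributes $1$, establishing the finite identity and hence the theorem.
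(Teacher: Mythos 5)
Your proposal is correct, and every step checks out: the reduction via $(-q;q^2)_\infty/(-q;q^2)_n=(-q^{2n+1};q^2)_\infty$ and \eqref{i6}, the exponent factorization $3n^2+m^2+2mn=(n+m)^2+2n^2$, the diagonal substitution $k=n+m$ (legitimate as a formal rearrangement since the exponent grows with $n+m$), and the finite vanishing identity, whose inner sum over $n$ after the $\binom{k}{n}_{q^2}\binom{k-n}{j}_{q^2}=\binom{k}{s}_{q^2}\binom{s}{n}_{q^2}$ rearrangement is indeed the finite $q$-binomial expansion of $(1;q^2)_s$, killing every term with $s\geq 1$. However, your route is genuinely different from the paper's. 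The paper runs the constant-term machine: it writes the sum as $CT$ of a product of a bilateral series and a unilateral one, converts both to infinite products via \eqref{i4} and \eqref{qBT}, regroups the products, converts back to series, and reads off the constant term as $\frac{1}{(-q;q^2)_\infty}\sum_n q^{n^2}/(q^4;q^4)_n$, finishing with \eqref{i6}. You instead start from where the paper ends --- multiplying through by $(-q;q^2)_\infty$ and invoking \eqref{i6} up front --- and replace the entire constant-term apparatus (in particular, both uses of the bilateral summation \eqref{i4}) with Euler's theorem \eqref{i5} and a finite $q$-Chu--Vandermonde-style polynomial identity. What your approach buys is elementarity and self-containedness: no bilateral series, no $CT$ formalism, and a clean finite identity at the core; it is close in spirit to Andrews' proof of the generalization \eqref{gg} mentioned after Theorem~\ref{thm4}. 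What it gives up is precisely the point the paper is making: the constant-term method is being showcased because, applied uniformly to other seed identities, it produces the new (irreducible) double-series identities of Theorems~\ref{thm4} and~\ref{thm5}, whereas your reduction exploits the special collapsibility of this particular exponent and would not generalize in the same mechanical way.
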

\begin{proof}
 \begin{align*}
      & \phantom{==} \sum_{n=0}^\infty \frac{(-1)^n q^{3n^2}}{ (q^4;q^4)_n (-q;q^2)_n} \\
      &= CT\left[ \sum_{n=-\infty}^\infty  \frac{ (-1)^n q^{n^2} z^n }{ (-q;q^2)_n} \sum_{m=0}^\infty \frac{z^{-m} q^{2m^2} }{ (q^4;q^4)_m  }  \right] \\
      &= CT\left[ \frac{ (zq;q^2)_\infty (z^{-1}q; q^2)_\infty (q^2;q^2)_\infty }{ (-z^{-1}q;q^2)_\infty (-q;q^2)_\infty } \times (-z^{-1} q^2; q^4)_\infty \right]
      \mbox{ (by~\eqref{i4} and~\eqref{qBT})} \\
      & = \frac{1}{(-q;q^2)_\infty} CT\left[ (zq;q^2)_\infty (z^{-1}q;q^2)_\infty (q^2;q^2)_\infty \times \frac{(-z^{-1}q^2;q^4)_\infty }{ (-z^{-1};q^2)_\infty} \right] \\
      & = \frac{1}{(-q;q^2)_\infty} CT\left[ (zq;q^2)_\infty (z^{-1}q;q^2)_\infty (q^2;q^2)_\infty \times \frac{1}{ (-z^{-1};q^4)_\infty} \right] \\
      & = \frac{1}{(-q;q^2)_\infty} CT\left[ \sum_{n=-\infty}^\infty (-1)^n q^{n^2} z^n \sum_{m=0}^\infty \frac{(-1)^m z^{-m} }{ (q^4;q^4)_m } \right] 
          \mbox{ (by~\eqref{i4} and~\eqref{qBT})} \\
      & = \frac{1}{(-q;q^2)_\infty} \sum_{n=0}^\infty \frac{ q^{n^2}}{(q^4;q^4)_n} \\
      & = \frac{1}{(-q;q^2)_\infty} \times \frac{G(q)}{(-q^2;q^2)_\infty} \mbox{  (by~\eqref{i6}) }\\
      & = \frac{G(q)}{(-q)_\infty} \\
      & = \prod_{n=1}^\infty \frac{1}{ (1+q^n)(1-q^{5n-4})(1-q^{5n-1})}.
 \end{align*}
\end{proof}

\begin{theorem} \label{thm2}
\begin{equation} \sum_{n=0}^\infty \frac{ (-1)^n q^{3n^2-2n} }{(q^4;q^4)_n (-q;q^2)_n } = \prod_{n=1}^\infty \frac{1}{ (1+q^n)(1-q^{5n-3})(1-q^{5n-2})}.
 \label{i12} 
 \end{equation}
\end{theorem}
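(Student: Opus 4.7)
My plan is to mirror the proof of Theorem~\ref{thm1} almost verbatim, adjusting only the power of $q$ in the second factor of the constant-term integrand. Since $3n^2-2n = n^2 + (2n^2-2n)$, I will begin by rewriting the left side of~\eqref{i12} as
\[
CT\!\left[ \sum_{n=-\infty}^\infty \frac{(-1)^n q^{n^2} z^n}{(-q;q^2)_n} \cdot \sum_{m=0}^\infty \frac{z^{-m} q^{2m^2-2m}}{(q^4;q^4)_m} \right],
\]
since only the diagonal $n=m\geq 0$ contributes to the coefficient of $z^0$.

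Next I will convert each series into an infinite product in $z$. Applying~\eqref{i4} with $q\mapsto q^2$, $t=zq$, $b=-q$ turns the bilateral sum into
\[
\frac{(zq;q^2)_\infty (z^{-1}q;q^2)_\infty (q^2;q^2)_\infty}{(-z^{-1};q^2)_\infty (-q;q^2)_\infty},
\]
while applying~\eqref{i5} with $q\mapsto q^4$ and $t=z^{-1}$, together with the identity $2m^2-2m = 4\binom{m}{2}$, turns the $m$-sum into $(-z^{-1};q^4)_\infty$. The crucial simplification — and the point at which this proof diverges from that of Theorem~\ref{thm1} — is
\[
\frac{(-z^{-1};q^4)_\infty}{(-z^{-1};q^2)_\infty} = \frac{1}{(-z^{-1}q^2;q^4)_\infty},
\]
which follows from the factorization $(-z^{-1};q^2)_\infty = (-z^{-1};q^4)_\infty (-z^{-1}q^2;q^4)_\infty$. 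In the analogous step of Theorem~\ref{thm1} the surviving factor was $1/(-z^{-1};q^4)_\infty$; the extra $q^2$ here is exactly what will later convert an invocation of~\eqref{i6} into an invocation of~\eqref{i7}.

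Now I will expand the remaining factors back into series: $(zq;q^2)_\infty (z^{-1}q;q^2)_\infty (q^2;q^2)_\infty = \sum_{n=-\infty}^\infty (-1)^n q^{n^2} z^n$ (obtained from~\eqref{i4} in the limit $b\to 0$, i.e.\ Jacobi triple product), and $1/(-z^{-1}q^2;q^4)_\infty = \sum_{m\geq 0} (-1)^m z^{-m} q^{2m}/(q^4;q^4)_m$ by~\eqref{qBT} with $a=0$. Extracting the constant term pairs $n=m\geq 0$ and yields
\[
\frac{1}{(-q;q^2)_\infty} \sum_{n=0}^\infty \frac{q^{n^2+2n}}{(q^4;q^4)_n} = \frac{H(q)}{(-q;q^2)_\infty(-q^2;q^2)_\infty} = \frac{H(q)}{(-q)_\infty}
\]
by~\eqref{i7}, which is exactly the right-hand side of~\eqref{i12}.

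The main obstacle is bookkeeping: I need to track the extra $q^{-2m}$ correctly through the $(q^4;q^4)$-expansions so that it lands as a $q^{+2n}$ shift at the end, pushing the identity from~\eqref{i6} (which would recover Theorem~\ref{thm1}) to~\eqref{i7}. No essentially new ingredient beyond the proof of Theorem~\ref{thm1} should be required.
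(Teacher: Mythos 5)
Your proof is correct and follows the paper's own argument step for step: the same constant-term setup, the same applications of \eqref{i4}, \eqref{i5}, and \eqref{qBT}, the same cancellation leaving $1/(-z^{-1}q^2;q^4)_\infty$, and the same final appeal to \eqref{i7}. (Incidentally, the denominator $(-z^{-1};q^2)_\infty$ you obtain after the first application of \eqref{i4} is the correct one; the $(-z^{-1}q;q^2)_\infty$ displayed at that step in the paper is a typo, as the paper's own subsequent line confirms.)
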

\begin{proof}
 \begin{align*}
      & \phantom{==} \sum_{n=0}^\infty \frac{(-1)^n q^{3n^2-2n}}{ (q^4;q^4)_n (-q;q^2)_n} \\
      &= CT\left[ \sum_{n=-\infty}^\infty  \frac{ (-1)^n q^{n^2} z^n }{ (-q;q^2)_n} \sum_{m=0}^\infty \frac{ z^{-m} q^{2m^2-2m} }{ (q^4;q^4)_m  }  \right] \\
      &= CT\left[ \frac{ (zq;q^2)_\infty (z^{-1}q; q^2)_\infty (q^2;q^2)_\infty }{ (-z^{-1}q;q^2)_\infty (-q;q^2)_\infty } \times (-z^{-1}; q^4)_\infty \right]
      \mbox{ (by~\eqref{i4} and~\eqref{qBT})} \\
      & = \frac{1}{(-q;q^2)_\infty} CT\left[ (zq;q^2)_\infty (z^{-1}q;q^2)_\infty (q^2;q^2)_\infty \times \frac{1}{ (-z^{-1} q^2;q^4)_\infty} \right] \\
      & = \frac{1}{(-q;q^2)_\infty} CT\left[ \sum_{n=-\infty}^\infty (-1)^n q^{n^2} z^n \sum_{m=0}^\infty \frac{ (-1)^m z^{-m} q^{2m} }{ (q^4;q^4)_m} \right]
          \mbox{ (by~\eqref{i4} and~\eqref{qBT})} \\
      & = \frac{1}{(-q;q^2)_\infty} \sum_{n=0}^\infty \frac{ q^{n^2+2n}}{(q^4;q^4)_n} \\
      & = \frac{1}{(-q;q^2)_\infty} \times \frac{H(q)}{(-q^2;q^2)_\infty} \mbox{  (by~\eqref{i7}) }\\
      & = \frac{H(q)}{(-q)_\infty} \\
      & = \prod_{n=1}^\infty \frac{1}{ (1+q^n)(1-q^{5n-3})(1-q^{5n-2})}.
 \end{align*}
\end{proof} 

Equations~\eqref{i11} and~\eqref{i12} were published previously with different proofs~\cite[pp. 155-156, Eqs. (19) and (15) respectively]{S52},
so nothing new appeared as a result of employing the method of constant terms.
\begin{remark}
  Theorems~\ref{thm1} and~\ref{thm2}  are due to Rogers~\cite[p. 339, Ex. 2; p. 330 (5), respectively]{R94}.
\end{remark}

\section{A new series--product identity}
\begin{remark}
With the benefit of hindsight, I see now that the identity presented in Theorem~\ref{thm3} was not really new.
It can be shown to be equivalent to Slater~\cite[p. 154, Eq. (48)]{S52} with $q$ replaced by $-q$.  \end{remark}

In light of the proofs of Theorems~\ref{thm1} and~\ref{thm2}, and the eight identities of L. J. Rogers that Andrews
proves via constant terms~\cite[pp. 33--36]{A86}, one might wonder if all identities of the Rogers--Ramanujan type
are provable by this method.  However, when we look at ``mod 6'' identities, we see immediately that this is not always
the case.

\begin{theorem}\label{thm3}
  \begin{equation} \label{i13}
   \sum_{n=0}^\infty \frac{ (-1;q^2)_n q^{n^2+n}}{ (q^2;q^2)_n (-q;q^2)_n } =
  \prod_{n=1}^\infty \frac{ (1-q^{6n-3})^2 (1-q^{6n}) }{(1-q^{4n-2})(1-q^{2n})} .
  \end{equation}
\end{theorem}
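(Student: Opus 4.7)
The plan is to apply the constant-term method of Theorems~\ref{thm1} and~\ref{thm2}, using the split $q^{n^2+n}=q^{n^2}\cdot q^n$. First I would write
\[
\sum_{n=0}^\infty\frac{(-1;q^2)_n\, q^{n^2+n}}{(q^2;q^2)_n(-q;q^2)_n}
 = CT\!\left[\sum_{n=-\infty}^\infty \frac{(-1)^n q^{n^2} z^n}{(-q;q^2)_n}\cdot\sum_{m=0}^\infty\frac{(-1;q^2)_m (-qz^{-1})^m}{(q^2;q^2)_m}\right],
\]
noting that only the pairing $n=m\ge 0$ contributes to the constant term and that the two $(-1)^n$'s then cancel to reproduce the target summand. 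Applying~\eqref{i4} (with $q\to q^2$, $t=zq$, $b=-q$) to the first factor and~\eqref{qBT} (with $q\to q^2$, $a=-1$, $t=-qz^{-1}$) to the second, and condensing $(-1/z;q^2)_\infty(-qz^{-1};q^2)_\infty=(-1/z;q)_\infty$, this becomes
\[
\text{LHS}=\frac{(q^2;q^2)_\infty}{(-q;q^2)_\infty}\,CT\!\left[\frac{(zq;q^2)_\infty(q/z;q^2)_\infty^2}{(-1/z;q)_\infty}\right].
\]

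Next, I would use the Jacobi triple product (the $b\to 0$ limit of~\eqref{i4}), $(zq;q^2)_\infty(q/z;q^2)_\infty(q^2;q^2)_\infty=\sum_{n\in\mathbb Z}(-1)^n q^{n^2}z^n$, to peel off one copy of $(zq;q^2)_\infty(q/z;q^2)_\infty$ and cancel the outside $(q^2;q^2)_\infty$. Expanding what remains, $(q/z;q^2)_\infty/(-1/z;q)_\infty$, as a series in $z^{-1}$ using~\eqref{i5} (applied with $t\to -t$ and $q\to q^2$) and~\eqref{qBT} with $a=0$, convolving the two series, and reading off the $z^0$-coefficient, everything collects into the explicit double sum
\[
\text{LHS}=\frac{1}{(-q;q^2)_\infty}\sum_{n\ge 0}q^{n^2}\sum_{j=0}^{n}\frac{q^{j^2}}{(q^2;q^2)_j(q)_{n-j}}.
\]

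The hard part is identifying this double sum with the claimed product. Using the base-change $(-q;q^2)_\infty(q;q^2)_\infty=(q^2;q^4)_\infty$, the remaining identity is equivalent to
\[
\sum_{n\ge 0}q^{n^2}\sum_{j=0}^{n}\frac{q^{j^2}}{(q^2;q^2)_j(q)_{n-j}}=\frac{(q^3;q^6)_\infty^2(q^6;q^6)_\infty}{(q;q)_\infty},
\]
which, unlike the mod~$5$ situation of Theorems~\ref{thm1} and~\ref{thm2}, does not immediately collapse via~\eqref{i6} or~\eqref{i7}. My intended route is to swap the order of summation, rewrite the inner sum as $\sum_{k\ge 0}q^{k^2+2jk}/(q)_k$, and aim for the single sum $\tfrac{1}{(q^2;q^4)_\infty}\sum_{n\ge 0}q^{n^2}(-q;q^2)_n/(q^4;q^4)_n$; a direct application of~\eqref{i8} then delivers the product $(q^3;q^6)_\infty^2(q^6;q^6)_\infty/((q^2;q^4)_\infty(q^2;q^2)_\infty)$. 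This bridging $q$-series transformation is the essential hurdle, and confirms the author's opening remark that the mod~$6$ setting resists a fully self-contained constant-term proof, needing an auxiliary identity (equivalent to Slater's Eq.~(48) with $q\mapsto -q$) to close the gap.
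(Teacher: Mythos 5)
Your opening constant-term computation is correct as far as it goes: the representation of the left-hand side of \eqref{i13} as a constant term, the applications of \eqref{i4}, \eqref{qBT}, \eqref{i5} and the triple product, and the resulting evaluation
\[
\sum_{n=0}^\infty \frac{(-1;q^2)_n\, q^{n^2+n}}{(q^2;q^2)_n(-q;q^2)_n}
=\frac{1}{(-q;q^2)_\infty}\sum_{m,r\ge 0}\frac{q^{2m^2+2mr+r^2}}{(q^2;q^2)_m(q)_r}
\]
all check out. But this is exactly where the argument stops being a proof. The double sum you arrive at is precisely the ``irreducible'' double sum of Theorem~\ref{thm5}, and your proposed way of evaluating it --- swap the order of summation, write the inner sum as $\sum_{k\ge 0}q^{k^2+2jk}/(q)_k$, and ``aim for'' $\tfrac{1}{(q^2;q^4)_\infty}\sum_n q^{n^2}(-q;q^2)_n/(q^4;q^4)_n$ --- is not carried out and will not go through with the tools at hand. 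The obstruction is a base mismatch: \eqref{i5} collapses a sum like $\sum_r q^{r^2+2mr}/(q^2;q^2)_r$ because $q^{r^2+2mr}=q^{2\binom{r}{2}}(q^{2m+1})^r$ matches Euler's identity in base $q^2$, whereas your inner sum carries the denominator $(q)_k$ in base $q$ against a quadratic exponent in base $q^2$ --- exactly the situation the paper describes as not reducible by any known method. So the ``essential hurdle'' you name at the end is the entire content of the theorem, and it is left unproved.

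The paper sidesteps this by running the constant-term argument in the opposite direction: it \emph{starts} from the known identity \eqref{i8}, whose left side is the single sum $\sum_n q^{n^2}(-q;q^2)_n/(q^4;q^4)_n$ and whose right side is (up to the factor $(-q;q^2)_\infty/(-q^2;q^2)_\infty$) the desired product. Writing $(q^4;q^4)_n=(-q^2;q^2)_n(q^2;q^2)_n$ and setting up the constant term from that side produces a double sum whose inner sum is $\sum_r q^{r^2+2mr}/(q^2;q^2)_r=(-q^{2m+1};q^2)_\infty$; here the bases match, \eqref{i5} applies, and everything collapses to the left side of \eqref{i13}. If you wish to salvage your direction, the cleanest repair is to recognize your double sum as the left side of \eqref{i18} and invoke Theorem~\ref{thm5} (which the paper proves independently from \eqref{i10}); since $(q^2;q^4)_\infty=(q;q^2)_\infty(-q;q^2)_\infty$, that immediately yields \eqref{i13}. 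Either way some auxiliary Slater-type input (\eqref{i8} or \eqref{i10}) is indispensable, as you correctly suspected; what is missing from your write-up is actually deploying one of them rather than gesturing at the gap.
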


\begin{proof}
 \begin{align}
      & \phantom{==} \frac{(q^3;q^3)_\infty^2 (q^6;q^6)_\infty (-q;q^2)_\infty  }{ (q^2;q^2)_\infty } \notag \\
      & =\sum_{n=0}^\infty \frac{ q^{n^2} (-q;q^2)_n }{ (q^4;q^4)_n}  \mbox{ (by~\eqref{i8})} \label{i14} \\
       & =\sum_{n=0}^\infty \frac{ q^{n^2} (-q;q^2)_n}{ (-q^2;q^2)_n (q^2;q^2)_n} \notag \\
      &= CT\left[ \sum_{n=-\infty}^\infty  \frac{  q^{n^2} z^n }{ (-q^2;q^2)_n} \sum_{m=0}^\infty \frac{ z^{-m} (-q;q^2)_m }{ (q^2;q^2)_m  }  \right] \notag\\
      &= CT\left[ \frac{ (-zq;q^2)_\infty (-z^{-1}q; q^2)_\infty (q^2;q^2)_\infty }{ (z^{-1}q;q^2)_\infty (-q^2;q^2)_\infty } 
               \times \frac{(-z^{-1} q; q^2)_\infty}{( z^{-1};q^2)_\infty } \right]
      \mbox{ (by~\eqref{i4} and~\eqref{qBT})} \notag \\
      & = \frac{1}{(-q^2;q^2)_\infty} CT\left[ (-zq;q^2)_\infty (-z^{-1}q;q^2)_\infty (q^2;q^2)_\infty \times \frac{(-z^{-1}q;q^2)_\infty }{ (-z^{-1}q;q^2)_\infty} 
        \times \frac{1}{(z^{-1};q^2)_\infty} \right] \notag \\
      & = \frac{1}{(-q^2; q^2)_\infty} CT\left[ \sum_{n=-\infty}^\infty q^{n^2} z^n \sum_{m=0}^\infty \frac{(-1;q^2)_m z^{-m} q^m }{ (q^2;q^2)_m }
         \sum_{r=0}^\infty \frac{z^{-r}}{(q^2;q^2)_r} \right] 
         \mbox{ (by~\eqref{i4} and~\eqref{qBT})} \notag\\
      & = \frac{1}{(-q^2;q^2)_\infty} \sum_{m,r\geq 0} \frac{ q^{(m+r)^2} (-1;q^2)_m }{(q^2;q^2)_m (q^2;q^2)_r} \notag\\
     & = \frac{1}{(-q^2;q^2)_\infty} \sum_{m=0}^\infty \frac{ q^{m^2+m} (-1;q^2)_m}{(q^2;q^2)_m} \sum_{r=0}^\infty \frac{ q^{r^2 + 2mr}  }{ (q^2;q^2)_r} \notag\\
     & = \frac{1}{(-q^2;q^2)_\infty} \sum_{m=0}^\infty \frac{ q^{m^2+m} (-1;q^2)_m}{(q^2;q^2_m} (-q^{2m+1};q^2)_\infty  \label{i15} \mbox{ (by~\eqref{i5})}\\
      & = \frac{(-q;q^2)_\infty}{ (-q^2;q^2)_\infty} \sum_{m=0}^\infty \frac{ q^{m^2+m} (-1;q^2)_m }{ (q^2;q^2)_m (-q;q^2)_m }. \label{i16}
 \end{align}
\end{proof} 

Here, the method of constant terms leads us from the series~\eqref{i14} to another series~\eqref{i16}, and~\eqref{i16} is not a straightforward restatement
of either of the Rogers--Ramanujan identities.  However, we could have proceeded from~\eqref{i14} to~\eqref{i16} using 
Heine's transformation~\cite[p. 19, Cor. 2.3]{A76}, which would have been easier than using constant terms.

\section{New double series--product identities}
What then, is the value of the method of constant terms?  The key lies in the step justifying equation~\eqref{i15} above.
We were able to collapse the double series into a single series above because of the particular exponents on $q$.
Many times this will not be the case, and we will be left with an ``irreducible'' double series.
Theorems~\ref{thm4} and~\ref{thm5} are examples of this situation.

\begin{theorem}\label{thm4}
\begin{equation} \label{i17}
  \sum_{m,r\geq 0} \frac{ q^{4m^2 + 4mr + 2r^2 - r} }{ (q^4;q^4)_m (q^2;q^2)_r } = \prod_{n=1}^\infty (1 + q^{2n-1}).
\end{equation}
\end{theorem}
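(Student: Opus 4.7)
The plan is to run the method of constant terms in the reverse direction from Theorem~\ref{thm3}: starting from the double sum on the left-hand side of~\eqref{i17}, I would recast it as a constant term of a product, simplify, and re-expand to recover a recognizable single series.

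The preparatory step is to complete the square in the exponent: $4m^2+4mr+2r^2-r = (2m+r)^2 + (r^2-r)$. Using that $CT\left[z^{-(2m+r)}\sum_{n=-\infty}^{\infty} q^{n^2} z^n \right] = q^{(2m+r)^2}$, the left-hand side of~\eqref{i17} becomes
\[
CT\left[\sum_{n=-\infty}^{\infty} q^{n^2}z^n \ \cdot\ \sum_{m\geq 0}\frac{z^{-2m}}{(q^4;q^4)_m} \ \cdot\ \sum_{r\geq 0}\frac{z^{-r}q^{r^2-r}}{(q^2;q^2)_r}\right].
\]
I would then convert each of the three factors into an infinite product: the $n$-series is Jacobi's triple product, a limiting case of~\eqref{i4}, giving $(q^2;q^2)_\infty(-zq;q^2)_\infty(-z^{-1}q;q^2)_\infty$; the $m$-series is $1/(z^{-2};q^4)_\infty$ by~\eqref{qBT} with $a=0$, $q\mapsto q^4$, $t=z^{-2}$; and the $r$-series is $(-z^{-1};q^2)_\infty$ by~\eqref{i5} with $q\mapsto q^2$ and $t=z^{-1}$.

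The key algebraic observation is the elementary factorization $(z^{-2};q^4)_\infty = (z^{-1};q^2)_\infty(-z^{-1};q^2)_\infty$, which causes the $(-z^{-1};q^2)_\infty$ coming from the $r$-sum to cancel one of the two factors in $(z^{-2};q^4)_\infty$, leaving
\[
CT\left[\frac{(q^2;q^2)_\infty(-zq;q^2)_\infty(-z^{-1}q;q^2)_\infty}{(z^{-1};q^2)_\infty}\right].
\]
Re-expanding the numerator as $\sum_{n=-\infty}^{\infty} q^{n^2}z^n$ (the same JTP specialization of~\eqref{i4}) and the reciprocal in the denominator as $\sum_{m\geq 0} z^{-m}/(q^2;q^2)_m$ by~\eqref{qBT}, the constant term selects the diagonal $n=m\geq 0$ and reduces to $\sum_{n=0}^{\infty} q^{n^2}/(q^2;q^2)_n$. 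By~\eqref{i5} with $q\mapsto q^2$, $t=q$ this equals $(-q;q^2)_\infty = \prod_{n\geq 1}(1+q^{2n-1})$, which is the right-hand side of~\eqref{i17}.

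The main obstacle is spotting that the factor $(-z^{-1};q^2)_\infty$ produced when applying~\eqref{i5} to the $r$-sum cancels cleanly against $(z^{-2};q^4)_\infty$; without this collapse the resulting constant term would not fold back into a single diagonal sum. Once that cancellation is in hand, the remaining manipulations are the same mechanical applications of~\eqref{i4}, \eqref{qBT}, and~\eqref{i5} that drive the earlier theorems of the paper.
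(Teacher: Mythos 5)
Your proposal is correct, but it runs the constant-term machine in the opposite direction from the paper and is, in a sense, more self-contained. The paper obtains \eqref{i17} by starting from the known single-sum identity \eqref{i9} (Slater's Eq.\ (52)): it writes the left side of \eqref{i9} as a constant term with the diagonal $n=m+r$, converts to products, regroups, and re-expands into the double sum, so the product evaluation is inherited from \eqref{i9}. You instead start from the double sum itself, encode the diagonal $n=2m+r$ by attaching $z^{-2m}$ to the $m$-sum and $z^{-r}$ to the $r$-sum, and reduce everything using only the triple product (the $b=0$ case of \eqref{i4}), \eqref{qBT}, and \eqref{i5}; the decisive step is the factorization $(z^{-2};q^4)_\infty=(z^{-1};q^2)_\infty(-z^{-1};q^2)_\infty$, after which the constant term collapses to the diagonal $n=m$ and yields the Euler series $\sum_{n\ge0}q^{n^2}/(q^2;q^2)_n=(-q;q^2)_\infty$. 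I checked the exponent identity $4m^2+4mr+2r^2-r=(2m+r)^2+(r^2-r)$ and each product conversion, and they are all in order; the formal manipulations with $1/(z^{-1};q^2)_\infty$ are of exactly the kind the paper already permits itself. What your route buys is independence from the external input \eqref{i9}: your argument uses only the triple product, the $q$-binomial theorem, and Euler's identities, and in fact, combined with the paper's transformation chain (which uses only the series side of \eqref{i9}), it yields an independent proof of \eqref{i9} itself. What the paper's route illustrates --- and this is its stated purpose --- is that feeding a known Rogers--Ramanujan-type identity into the constant-term machine can produce an ``irreducible'' double series; your computation shows that for this particular identity the machine can also be driven from the double-sum end directly to the product.
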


\begin{proof}
 \begin{align*}
 (-q)_\infty     & = \sum_{n=0}^\infty \frac{q^{2n^2 - n } (-q;q^2)_n }{ (q^2;q^2)_n (q^2;q^4)_n} \mbox{ (by~\eqref{i9})} \\
      &= CT\left[ \sum_{n=-\infty}^\infty  \frac{ q^{2n^2-n} z^n }{ (q^2;q^4)_n} \sum_{m=0}^\infty \frac{ z^{-m} (-q;q^2)_m }{ (q^2;q^2)_m  }  \right] \\
      &= CT\left[ \frac{ (-zq;q^4)_\infty (-z^{-1}q^3; q^2)_\infty (q^4;q^4)_\infty }{ (-z^{-1}q;q^4)_\infty (q^2;q^4)_\infty } \times 
        \frac{ (-z^{-1} q; q^2)_\infty}{ (z^{-1};q^2)_\infty } \right]
      \mbox{ (by~\eqref{i4} and~\eqref{qBT})} \\
      & = \frac{1}{(q^2;q^4)_\infty} CT\left[ (-zq;q^4)_\infty (-z^{-1}q^3;q^4)_\infty (q^4;q^4)_\infty \times \frac{(-z^{-1}q^3;q^4)_\infty }{ (z^{-1};q^2)_\infty} \right] \\
      & = \frac{1}{(q^2;q^4)_\infty} CT\left[ \sum_{n=-\infty}^\infty (-1)^n q^{2n^2-n} z^n \sum_{m=0}^\infty \frac{ q^{2m^2+m} z^{-m} }{ (q^4;q^4)_m } 
         \sum_{r=0}^\infty \frac{z^{-r}}{ (q^2;q^2)_r }\right] 
          \mbox{ (by~\eqref{i4},~\eqref{i5}, and~\eqref{qBT})} \\
      & = \frac{1}{(q^2;q^4)_\infty} \sum_{m,r\geq 0} \frac{ q^{2(m+r)^2 - (m+r) + 2m^2 + m} }{ (q^2;q^2)_m (q^2;q^2)_r } \\
    & = \frac{1}{(q^2;q^4)_\infty} \sum_{m,r\geq 0} \frac{ q^{4m^2 + 4mr + 2r^2 - r} }{ (q^2;q^2)_m (q^2;q^2)_r }. \\
 \end{align*}
\end{proof} 

\begin{remark}
George Andrews~\cite{GAPC} communicated the following generalization of~\eqref{i17}.
\begin{equation} \label{gg}
   \sum_{m,r\geq 0} \frac{ t^{2m+r} q^{4m^2 + 4mr + 2r^2 - r} }{ (q^4;q^4)_m (q^2;q^2)_r } 
   = \prod_{n=1}^\infty (1 + tq^{2n-1}).
\end{equation}  Andrews' proof of~\eqref{gg} relies on the $q$-Chu-Vandermonde 
sum~\cite[p. 37, Eq. (3.3.10]{A76} and an identity of Euler~\cite[p. 19, Eq. (2.2.6)]{A76}.
\end{remark}

\begin{theorem}\label{thm5}
\begin{equation} \label{i18}
  \sum_{m,r\geq 0} \frac{ q^{2m^2 + 2mr + r^2} }{ (q^2;q^2)_m (q)_r } = \prod_{n=1}^\infty \frac{(1-q^{6n-3})^2(1-q^{6n}) }{1-q^n }.
\end{equation}
\end{theorem}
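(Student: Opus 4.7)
My plan is to mimic the proof of Theorem~\ref{thm4}, this time starting from identity~\eqref{i10}. Observe that the right-hand side of~\eqref{i18} equals $(q^3;q^6)_\infty^2(q^6;q^6)_\infty/(q)_\infty$, which is precisely the right-hand side of~\eqref{i10} divided by $(-q)_\infty$. Since $(q;q^2)_\infty(-q)_\infty=1$, it suffices to apply the method of constant terms to the series in~\eqref{i10} and show that the outcome is $(q;q^2)_\infty^{-1}\sum_{m,r\geq 0} q^{2m^2+2mr+r^2}/\bigl((q^2;q^2)_m(q)_r\bigr)$; identity~\eqref{i18} will then follow by multiplying through by $(q;q^2)_\infty$.

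The first step is the splitting
\[
\sum_{n=0}^\infty \frac{q^{n^2}(-q)_n}{(q;q^2)_{n+1}(q)_n}
= CT\!\left[\,\sum_{n=-\infty}^{\infty} \frac{q^{n^2}z^n}{(q;q^2)_{n+1}}\;\sum_{m=0}^{\infty} \frac{(-q)_m\,z^{-m}}{(q)_m}\right].
\]
After factoring out $1/(1-q)$ so that the denominator of the first series becomes $(q^3;q^2)_n$, I would apply~\eqref{i4} with $q\to q^2$, $t=-zq$, $b=q^3$ to that series, and apply~\eqref{qBT} with $a=-q$, $t=z^{-1}$ to the second. Multiplying the two resulting products together and then using $(-q/z;q)_\infty=(-q/z;q^2)_\infty(-q^2/z;q^2)_\infty$ to cancel the factor $(-q^2/z;q^2)_\infty$ that appears in both the numerator and denominator should leave the expression inside the $CT$ equal to
\[
\frac{(q^2;q^2)_\infty}{(q;q^2)_\infty}\cdot\frac{(-zq;q^2)_\infty\,(-q/z;q^2)_\infty^2}{(z^{-1};q)_\infty}.
\]

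To convert this back to a double series, I would pull $(q^2;q^2)_\infty$ inside the $CT$ and re-expand each $z$-dependent factor as a series: the $b=0$ case of~\eqref{i4} (Jacobi's triple product) gives $(-zq;q^2)_\infty(-q/z;q^2)_\infty(q^2;q^2)_\infty=\sum_n q^{n^2}z^n$; identity~\eqref{i5} gives $(-q/z;q^2)_\infty=\sum_m q^{m^2}z^{-m}/(q^2;q^2)_m$; and~\eqref{qBT} (with $a=0$) gives $1/(z^{-1};q)_\infty=\sum_r z^{-r}/(q)_r$. Extracting the coefficient of $z^0$ forces $n=m+r$, and the $q$-exponent collapses to $(m+r)^2+m^2=2m^2+2mr+r^2$, producing the target double sum divided by $(q;q^2)_\infty$. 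Equating with~\eqref{i10} and using $(q;q^2)_\infty(-q)_\infty=1$ yields~\eqref{i18}.

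The delicate point is the initial splitting: it must produce, after cancellation, exactly a squared factor $(-q/z;q^2)_\infty^2$. This squaring is what later allows one copy to be absorbed into the theta-function sum (setting up the diagonal constraint $n=m+r$) while the other copy supplies the second summation index that carries the $(q^2;q^2)_m$ in the final answer. Any other natural splitting either collapses to a single-variable sum, as happens in Theorem~\ref{thm3}, or leaves an irreducible triple sum that cannot be matched to~\eqref{i18}.
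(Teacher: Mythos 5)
Your proposal is correct and follows essentially the same route as the paper's own proof: starting from~\eqref{i10}, splitting off the factor $1/(1-q)$, applying~\eqref{i4} and~\eqref{qBT}, cancelling $(-z^{-1}q^2;q^2)_\infty$ to leave the squared factor $(-z^{-1}q;q^2)_\infty^2$, re-expanding via the triple product,~\eqref{i5}, and~\eqref{qBT}, and extracting the constant term with $n=m+r$. All the intermediate expressions you give match the paper's computation, and the final appeal to $(q;q^2)_\infty(-q)_\infty=1$ is exactly how the product sides are reconciled.
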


\begin{proof}
 \begin{align*}
      & \phantom{==} \frac{ (q^3;q^6)_\infty^2 (q^6;q^6)_\infty (-q)_\infty}{ (q)_\infty } \\    
      & = \sum_{n=0}^\infty \frac{q^{n^2} (-q)_n }{ (q;q^2)_{n+1} (q)_n} \mbox{ (by~\eqref{i10})} \\
      &= CT\left[ \frac{1}{1-q} \sum_{n=-\infty}^\infty  \frac{ q^{n^2} z^n }{ (q^3;q^2)_n} \sum_{m=0}^\infty \frac{ z^{-m} (-q)_m }{ (q)_m  }  \right] \\
      &= CT\left[ \frac{ (-zq;q^2)_\infty (-z^{-1}q; q^2)_\infty (q^2;q^2)_\infty }{ (-z^{-1}q^2;q^2)_\infty (q;q^2)_\infty } \times 
        \frac{ (-z^{-1} q)_\infty}{ (z^{-1})_\infty } \right]
      \mbox{ (by~\eqref{i4} and~\eqref{qBT})} \\
      & = \frac{1}{(q;q^2)_\infty} CT\left[ (-zq;q^2)_\infty (-z^{-1}q;q^2)_\infty (q^2;q^2)_\infty \times (-z^{-1}q;q^2)_\infty \times \frac{1}{ (z^{-1})_\infty } \right] \\
      & = \frac{1}{(q;q^2)_\infty} CT\left[ \sum_{n=-\infty}^\infty  q^{n^2} z^n \sum_{m=0}^\infty \frac{ q^{m^2} z^{-m} }{ (q^2;q^2)_m } 
         \sum_{r=0}^\infty \frac{z^{-r}}{ (q)_r }\right] 
          \mbox{ (by~\eqref{i4},~\eqref{i5}, and~\eqref{qBT})} \\
      & = \frac{1}{(q;q^2)_\infty} \sum_{m,r\geq 0} \frac{ q^{(m+r)^2 + m^2} }{ (q^2;q^2)_m (q)_r } \\
    & = \frac{1}{(q;q^2)_\infty} \sum_{m,r\geq 0} \frac{ q^{2m^2 + 2mr + r^2} }{ (q^2;q^2)_m (q)_r }. \\
 \end{align*}
\end{proof} 

\begin{remarkWZ}

[T]his is a particular case of Bressoud's identities~\cite{B79}; when you replace 2s by 3s on the left, you get a 
Kanade--Russell mod 9 (see a discussion at the end of my paper with Ali Uncu~\cite{UZ}). 
The very same identity is recently treated again as the constant term, but in a more sophisticated way! See ~\cite[Theorem 1.1]{LW}\dots. The techniques of using the integrals for the complex terms and combining those with some theorems from the $q$-Bible are nicely given by Hjalmar [Rosengren] in~\cite{HR}.
\end{remarkWZ}

One might be suspicious that the double series in~\eqref{i17} and~\eqref{i18} are actually collapsible 
into single series.  Just because~\eqref{i5} is not applicable to the double series in~\eqref{i17} and~\eqref{i18} as
it was in~\eqref{i15}, does not mean that there is not some other way to simplify the double series.
However, currently there is no known general method for simplifying arbitrary multiple $q$-series.
Andrews~\cite{A81} has dealt with several special cases, but these apparently do not apply here.
So, by referring to these double series as ``irreducible,'' I mean that they cannot be transformed into a single-fold
sum by any currently known method.

\section{Conclusion}
This study suggests several directions for further research.
One could, of course, apply the method of constant terms to any identity of the Rogers--Ramanujan type
(there are 130 such identities in Slater~\cite{S52}), to see what series are generated by the method.
In a more advanced study, one could attempt to identify which partitions are enumerated by these series.

\section*{Acknowledgments}
First and foremost, I owe a great debt of gratitude to George Andrews, who served as my graduate advisor,
and has continued to act as a mentor to me over these many years since.
Thanks to Hjalmar Rosengren, who encouraged me to post this old essay on the arXiv, as a result
of a three-way email conversation among George Andrews, Hjalmar, and myself.   Within hours of version 1 first
appearing on the arXiv, I received emails from Wadim Zudilin and George Andrews with interesting comments
and some corrections; many thanks to them as well.

\end{document}